\let\nc\newcommand
\nc{\la}{\label}
\newtheorem{theorem}{Theorem}[section]
\newtheorem{proposition}[theorem]{Proposition}
\theoremstyle{definition}
\newtheorem{definition}[theorem]{Definition}
\theoremstyle{remark}
\newcommand{\into}{\,\,\hookrightarrow\,\,}
\newcommand{\colim@}[2]{%
  \vtop{\m@th\ialign{##\cr
    \hfil$#1\operator@font colim$\hfil\cr
    \noalign{\nointerlineskip\kern1.5\ex@}#2\cr
    \noalign{\nointerlineskip\kern-\ex@}\cr}}%
}
\newcommand{\colim}{%
  \mathop{\mathpalette\colim@{\rightarrowfill@\scriptscriptstyle}}\nmlimits@
}
\renewcommand{\varprojlim}{%
  \mathop{\mathpalette\varlim@{\leftarrowfill@\scriptscriptstyle}}\nmlimits@
}
\renewcommand{\varinjlim}{%
  \mathop{\mathpalette\varlim@{\rightarrowfill@\scriptscriptstyle}}\nmlimits@
}
\newenvironment{dedication}
  {
   \itshape             
   \raggedleft          
  }
\title{A note on the defintion of derived functors}
\author{Jo\~ao Schwarz}
\address{Shenzhen International Center for Mathematics, SUSTech, 1088 Xueyuan Avenue, Shenzhen 518055,
P.R. China}
\email{jfschwarz.0791@gmail.com}
\begin{document}

\medskip

\medskip

\begin{abstract}
    The purpose of this note is to consider in detail the construction of derived functors. The classical construction, such as in Cartan-Eilenberg or Grothendieck, is clarified, and it is shown, at the same time, that everything can be formalized in \textbf{ZFC}, unlike the approach using derived categories. Our work is done in a more general context in which the codomain of our functors is any Grothendieck category, not necessarily abelian groups.
\end{abstract}

\maketitle

\begin{dedication}
    Dedicated to Gabi.
\end{dedication}

\medskip

\medskip

\section{Introduction}

In many popular books about homological algebra (e.g. \cite{Weibel}, \cite{Rotman}, \cite{HS}), or in the classical works of Cartan and Eilenberg \cite{CE} and Grothendieck's famous Tôhoku paper \cite{Grothendieck}, to obtain, say, the left derived funtors $L_n(F)$ of a right exact functor $F$ from an abelian category $\mathcal{C}$ with enough projectives to the category of abelian groups $\mathfrak{Ab}$, the authors obtain a projective resolution of an object $C \in \mathcal{C}$, denoted here by $(\mathsf{P}, \epsilon)$ (that is, $\mathsf{P}$ is a chain complex of projective objects and the chain complex $\mathsf{P} \xrightarrow{\epsilon} C$ is exact), and then define, for each $n \in \mathbb
{N}$, the n-th left derived functor $L_n(F)$ as the n-th cohomology group $H^n(F(\mathsf{P}))$, by noticing that, had we choosen another projective resolution $\mathsf{Q}$, there exists a canonical isomorphism between $H^n(F(\mathsf{P}))$ and $H^n(F(\mathsf{Q}))$ (see, for instance, \cite[Proposition 5.1]{HS}).

However, to have a functor we \emph{need} to fix once and for all an object of $\mathfrak{Ab}$, and the set of isomorphisms classes of $H^n(F(\mathsf{P}))$, first, is actually not set (it is in bijection with the universe $\mathsf{V}$), and second, it obviously doesn't have an abelian group structure. This small gap seems to missing from the original sources as well the textbooks mentioned above. The first purpose of this note is to a give a full and detailed explanation of the missing last step of the classical approach - forthe best of our knowledge, is also the first writing account of it.

It is important to mention that the modern approach, using derived categories, successfully solves the issues raised above and has many advantages with respect to the theory of $\delta$-functors in \cite{Grothendieck}. For an excellent introduction to derived categories and, as an example of its success, the theory of $D$-modules, see, for instance \cite{Borel}. However, regarding its set-theoretical foundations, a series of important technical issues appear. To the best of our understanding, there is no clear exposition of the foundations of derived categories in the literature. As remarked in \cite[10.3.3, 10.3.6]{Weibel}, the set theoretical foundations are not considered in the classical references \cite{Hartshorne}, \cite{Verdier}, \cite{GZ}, and even if we follow the popular approach of Universes, the following can happen: fixing an Universe $\mathcal{U}$ and a locally small category $\mathcal{C}$ in $\mathcal{U}$, the localization of the category may not be again locally small.

Given this, we stress the second aim of this note: to show that the classical theory of $\delta$-functors can be completely formalized in \textbf{ZFC}; the proof of which involves some simple but not trivial arguments within set theory and category theory, to be discussed bellow. That the theory of $\delta$-functors in \cite{Grothendieck} could be carried out in \textbf{ZFC} was probably expected, but again no detailed exposition of this - to the best of this author knowledge - is to be found in the literature. We avoid the use of the axiom of choice as much as possible: not because of any objections against it, of course, but to make our construction the more ``natural" possible. Also, at the end of this paper, we stress some other foundational issues associated with choosing more than just \textbf{ZFC} for a formalization of the theory of derived functors, arising from logic (cf. \cite{Kunen}, \cite{Foundations}).

The third clarification is which kind of abelian categories are allowed as a  target category of our derived functors in order for the theory to be formalized in \textbf{ZFC}. We show that, instead of restricting ourselves to $\mathfrak{Ab}$, we can work over any Grothendieck category, and it is an open problem how much the theory can be extended to more general abelian categories and the theory of derived functors still be defined in \textbf{ZFC}.

The mathematical ideas behind this note are, essentially, two tricks: Scott's famous trick \cite{Dana}, and an application of Gabriel-Popescu Theorem to show that limits in any Grothendieck theory can be computed uniformly just like in \textbf{SET}. This later result (Theorem \ref{only-theorem}) is trickier than it seems and has some independent interest.

As a last foundational remark before we start, we point out that since we rely heavily on Scott's trick, our approach also makes a point against a commonly held view (see, for instance, \cite{Kunen}) that the axiom of regularity is somewhat superfluous for the actual mathematical practice.

\section{Derived functors in ZFC}

Now we properly start. For the sake of completeness, we recall the notion of a Grothendieck category:

\begin{definition}\cite{Grothendieck}
An abelian category $\mathfrak{Gr}$ is called a Grothendieck category if it satisfies the axiom \textbf{AB5} (cf. \cite{Grothendieck}) that is, the inductive limit functor induced by a directed set is exact; and if it has a family of generators $\{ U_i \}$. 
\end{definition}

Examples of Grothendieck categories abound: we can mention the category $R-\operatorname{Mod}$ for an unital ring $R$, with $\mathfrak{Ab}$ corresponding to $\mathbb{Z}$-modules. Another example is, given a ringed space $(X, \mathcal{O}_X)$, the cateogry of sheaves of $\mathcal{O}_X$-modules.

To our approach to derived functors, we will need to know that for every Grothendieck category, limits of diagrams can be computed as in \textbf{SET} (e.g., \cite{Stacks}):

\begin{proposition}
Let $\mathcal{D}: \mathcal{I} \rightarrow $ \textbf{SET} be a diagram. Then $\operatorname{Lim}_\mathcal{I} \mathcal{D}$ can be constructed explicitly as

\[\{ \langle \mathcal{D}(i)) \rangle_{i \in \mathcal{I}}|(\forall \phi) \phi: i \rightarrow i', \mathcal{D}(\phi)(\mathcal{D}(i))=\mathcal{D}(i') \}. \]
\end{proposition}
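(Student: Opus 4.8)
The plan is to verify the universal property of the limit directly for the explicit set
\[
L = \Bigl\{\, \langle x_i \rangle_{i \in \mathcal{I}} \in \textstyle\prod_{i \in \mathcal{I}} \mathcal{D}(i) \;\Big|\; (\forall \phi\colon i \to i')\ \mathcal{D}(\phi)(x_i) = x_{i'} \,\Bigr\},
\]
equipped with the projection maps $\pi_i \colon L \to \mathcal{D}(i)$ sending $\langle x_j \rangle_j \mapsto x_i$. First I would observe that $L$ is a genuine set by the axiom of separation: it is a subset of the product $\prod_{i} \mathcal{D}(i)$, which exists in \textbf{ZFC} once $\mathcal{I}$ is a set (here I am implicitly assuming $\mathcal{I}$ is a small index category, which is the relevant case for the application to derived functors). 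The defining condition is a first-order formula, so the comprehension is legitimate.

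Next I would check that $(L, \{\pi_i\})$ is a cone over $\mathcal{D}$, i.e.\ that for every morphism $\phi\colon i \to i'$ in $\mathcal{I}$ one has $\mathcal{D}(\phi) \circ \pi_i = \pi_{i'}$. This is immediate from the defining relation of $L$: for $x = \langle x_j \rangle_j \in L$ we have $\mathcal{D}(\phi)(\pi_i(x)) = \mathcal{D}(\phi)(x_i) = x_{i'} = \pi_{i'}(x)$. I would then verify universality: given any set $S$ with a cone $\{f_i \colon S \to \mathcal{D}(i)\}$ (so $\mathcal{D}(\phi) \circ f_i = f_{i'}$ for all $\phi$), I define $u \colon S \to L$ by $u(s) = \langle f_i(s) \rangle_{i}$. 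The cone condition on the $f_i$ guarantees precisely that each tuple $\langle f_i(s) \rangle_i$ satisfies the compatibility relation, so $u$ lands in $L$; and $\pi_i \circ u = f_i$ holds by construction. Uniqueness of $u$ follows because any $v \colon S \to L$ with $\pi_i \circ v = f_i$ must have $i$-th coordinate $f_i(s)$ for every $i$, hence agrees with $u$.

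The argument is essentially formal and presents no serious obstacle; the only genuine subtlety worth flagging is the set-theoretic one, which is in fact the whole point of the paper. The step that requires care is asserting that $\prod_{i \in \mathcal{I}} \mathcal{D}(i)$ exists as a set: this needs $\mathcal{I}$ to be small (a set-indexed diagram) and uses the axioms of power set and replacement, but notably does \emph{not} require the axiom of choice, since we are merely forming the product as a set of functions $\langle x_i \rangle$ and then cutting it down by separation, rather than asserting the product is nonempty. I would make this explicit so as to keep the construction compatible with the paper's stated goal of minimizing the use of choice. Everything else is a routine diagram chase, so I expect the proof to be short; the real content is simply that the limit admits this canonical, choice-free description, which is what will later let limits in a Grothendieck category be computed "uniformly just like in \textbf{SET}."
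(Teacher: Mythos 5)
Your proof is correct and is precisely the standard argument the paper relies on: the paper itself gives no proof, merely citing the Stacks Project, where this same verification (the compatible-tuples subset of the product, cut out by separation, checked against the universal property) is carried out. Your added remark that the construction needs smallness of $\mathcal{I}$ but not the axiom of choice --- and your silent correction of the statement's notational conflation of $\mathcal{D}(i)$ with elements $x_i \in \mathcal{D}(i)$ --- are both accurate and consistent with the paper's intent.
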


\medskip

 We recall that a concrete category $\mathcal{C}$ is one which admits a fully faithfull embedding in the category of sets, and that a category is locally small if its $\mathsf{Hom}$ sets are not proper classes. Grothendieck categories are locally small (\cite[5.35]{BD}), and concrete, by Gabriel-Popescu Theorem:

 \begin{theorem}\cite{GP}
Let $\mathfrak{Gr}$ be a Grothendieck category. Then there is a unital ring $R$ and functors $G: \mathfrak{Gr} \rightarrow R-\operatorname{Mod}$ and $F: R-\operatorname{Mod} \rightarrow \mathfrak{Gr} $ such that $G$ is fully faithfull, $F$ is its left adjoint and exact.
 \end{theorem}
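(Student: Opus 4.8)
The plan is to follow the classical argument of Gabriel and Popescu, first reducing to a single generator. Since $\mathfrak{Gr}$ satisfies \textbf{AB5} it is in particular cocomplete, so the coproduct $U = \bigoplus_i U_i$ of the given generating family exists and is again a generator. I would then put $R = \End_{\mathfrak{Gr}}(U)$ (with the side convention chosen so that the construction lands in left $R$-modules) and define $G = \operatorname{Hom}_{\mathfrak{Gr}}(U, -) \colon \mathfrak{Gr} \to R\text{-}\operatorname{Mod}$, where the $R$-action comes from precomposition by endomorphisms of $U$.

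For the left adjoint I would give the explicit ``tensor'' construction rather than merely invoke the special adjoint functor theorem, because the explicit form is what the exactness argument needs. On free modules set $F(R^{(I)}) = U^{(I)}$; for a general $M$, choose a presentation $R^{(J)} \xrightarrow{d} R^{(I)} \to M \to 0$ and put $F(M) = \operatorname{coker}\bigl(U^{(J)} \xrightarrow{F(d)} U^{(I)}\bigr)$, then check independence of the presentation and functoriality. The adjunction isomorphism $\operatorname{Hom}_{\mathfrak{Gr}}(F(M), A) \cong \operatorname{Hom}_R(M, G(A))$ follows formally from the universal properties of coproducts and cokernels together with the identity $\operatorname{Hom}_{\mathfrak{Gr}}(U, A) = G(A)$; that $F$ is right exact is then automatic, being a left adjoint.

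Next I would reduce full faithfulness of $G$ to the statement that the counit $\varepsilon_A \colon FG(A) \to A$ is an isomorphism for every $A$, which is the standard characterization of a right adjoint being fully faithful. Taking $I = \operatorname{Hom}_{\mathfrak{Gr}}(U,A)$ as the canonical generating set in the presentation above, $\varepsilon_A$ fits into the evaluation map $U^{(\operatorname{Hom}(U,A))} \to A$, which is an epimorphism precisely because $U$ is a generator; hence $\varepsilon_A$ is epi. Moreover, since $\operatorname{Hom}(U,-)$ is faithful, it suffices to prove $\operatorname{Hom}(U, \ker \varepsilon_A) = 0$ to conclude $\ker \varepsilon_A = 0$ and hence that $\varepsilon_A$ is also mono.

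The main obstacle is exactly this last vanishing, together with the exactness of $F$; both rest on \textbf{AB5} and constitute the technical heart of the theorem. Here I would follow Gabriel-Popescu: write $G(A)$, and more generally every $R$-module, as the filtered colimit of its finitely generated submodules, use that $F$ preserves colimits to reduce to finitely presented modules, and then translate a hypothetical nonzero element of $\ker \varepsilon_A$ into an increasing (filtered) family of subobjects of $A$ controlled by $U$; exactness of filtered colimits, i.e.\ \textbf{AB5}, forces the element to vanish, and the same mechanism shows that $F$ carries monomorphisms to monomorphisms, hence is exact. I expect everything except this filtered-colimit argument to be routine abstract nonsense; the genuine difficulty, and the only place the Grothendieck hypotheses enter in an essential way, is the use of \textbf{AB5} to kill the kernel.
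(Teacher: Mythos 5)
The paper never proves this statement: it is imported wholesale by the citation \cite{GP}, so there is no internal argument to compare against, and your sketch should be judged against the classical proof of Gabriel--Popescu itself (as in the original note, or Stenstr\o m's and Mitchell's expositions). Measured that way, your outline is the standard one and is correct: reduction to a single generator $U=\bigoplus_i U_i$ (legitimate, since \textbf{AB5} in Grothendieck's sense includes cocompleteness), $R=\End_{\mathfrak{Gr}}(U)$, $G=\operatorname{Hom}_{\mathfrak{Gr}}(U,-)$, the explicit presentation-wise construction of the tensor left adjoint $F$, the counit being epi because $U$ generates, the reduction of full faithfulness of $G$ to the counit being an isomorphism, and the reflection $\operatorname{Hom}(U,\ker\varepsilon_A)=0 \Rightarrow \ker\varepsilon_A=0$ via faithfulness of $\operatorname{Hom}(U,-)$ --- all of these steps are sound. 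The one place your write-up is genuinely thin is exactly the place you flag: the \textbf{AB5} kernel-killing argument. ``Translate a nonzero element of $\ker\varepsilon_A$ into an increasing family of subobjects of $A$'' is directionally right but is not yet a proof; the actual lemma one must state and prove is that for a finitely generated submodule $M=\sum_{i=1}^n u_iR \leq G(A)$ and an element of $\ker(F(M)\to A)$, the generator property supplies a common factorization through a single morphism $U\to A$ absorbing the relation, and \textbf{AB5} is then what lets you pass this finitary statement up the filtered colimit of all finitely generated submodules, simultaneously yielding that $F$ preserves monomorphisms. Since you explicitly defer to Gabriel--Popescu for this step rather than supplying it, your proposal is in the end doing the same thing the paper does --- delegating the hard content to \cite{GP} --- just with a much more informative wrapper around the citation.
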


 An important consequence of this is that the functor $G$ is monadic \cite{Joy}, and regarding these:

 \begin{proposition}\label{creates}
     A monadic functor creates all limits in its codomain.
 \end{proposition}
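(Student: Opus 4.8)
Let $U\colon\mathcal{A}\to\mathcal{B}$ be monadic. The plan is to reduce to the Eilenberg–Moore description of monadicity and then to build the limit cone by hand among algebras. Recall that monadicity of $U$ means that $U$ admits a left adjoint $F$, that $T=UF$ carries the structure of a monad $(T,\eta,\mu)$ on the codomain $\mathcal{B}$, and that the comparison functor $K\colon\mathcal{A}\to\mathcal{B}^{T}$ into the category of $T$-algebras is an equivalence, with $U\cong U^{T}\circ K$ where $U^{T}\colon\mathcal{B}^{T}\to\mathcal{B}$ is the forgetful functor. Since an equivalence of categories creates limits, and since creation of limits is stable under composition and under natural isomorphism, it suffices to prove that $U^{T}$ creates all limits that exist in $\mathcal{B}$.

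So let $D\colon\mathcal{I}\to\mathcal{B}^{T}$ be a diagram, write $D(i)=(X_{i},a_{i})$ with structure maps $a_{i}\colon TX_{i}\to X_{i}$, and suppose the underlying diagram $U^{T}D$ has a limit cone $(L,(\pi_{i})_{i})$ in $\mathcal{B}$. First I would produce a candidate algebra structure on $L$. For each $i$ consider the composite $a_{i}\circ T\pi_{i}\colon TL\to X_{i}$; using that every $D(\phi)$ is an algebra morphism together with the cone identity $\pi_{i'}=U^{T}D(\phi)\circ\pi_{i}$ and the functoriality of $T$, one checks that these composites form a cone over $U^{T}D$ with apex $TL$. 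The universal property of $(L,(\pi_{i}))$ then yields a unique arrow $a\colon TL\to L$ with $\pi_{i}\circ a=a_{i}\circ T\pi_{i}$ for all $i$.

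Next I would verify that $(L,a)$ is a $T$-algebra whose projections are algebra maps, and then that it creates the limit. Both algebra axioms, $a\circ\eta_{L}=\mathrm{id}_{L}$ and $a\circ Ta=a\circ\mu_{L}$, are proved by the same device: by the uniqueness clause in the universal property of the limit it suffices to check that the two sides agree after composing with each $\pi_{i}$, and doing so reduces each identity, via naturality of $\eta$ and $\mu$ and the defining relation $\pi_{i}\circ a=a_{i}\circ T\pi_{i}$, to the corresponding unit or associativity axiom for $(X_{i},a_{i})$. That same defining relation says each $\pi_{i}\colon(L,a)\to(X_{i},a_{i})$ is an algebra morphism, so $(L,(\pi_{i}))$ is a cone over $D$ in $\mathcal{B}^{T}$. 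Creation now has two halves. Uniqueness of the lift: any structure $a'$ on $L$ making every $\pi_{i}$ an algebra map satisfies $\pi_{i}\circ a'=a_{i}\circ T\pi_{i}=\pi_{i}\circ a$, so $a'=a$. Universality: given any cone $(\psi_{i}\colon(Y,b)\to(X_{i},a_{i}))$ in $\mathcal{B}^{T}$, its underlying cone factors uniquely through $(L,(\pi_{i}))$ as some $h\colon Y\to L$ in $\mathcal{B}$; comparing $h\circ b$ with $a\circ Th$ after each $\pi_{i}$, and using that $\psi_{i}$ is an algebra map, shows $h$ is an algebra morphism $(Y,b)\to(L,a)$, unique because $U^{T}$ is faithful. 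Hence $(L,a)$ is a limit of $D$ that $U^{T}$ preserves and reflects.

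The main obstacle is conceptual rather than computational: it lies in the reduction step, that is, in taking seriously that monadicity is precisely an equivalence over $\mathcal{B}$ with the Eilenberg–Moore category, so that the property ``creates limits'' may be transported along $K$. Once one is genuinely working with $T$-algebras, the remaining verifications are forced by the universal property and amount to the routine checks above, each of which merely transports one axiom or one factorization along the limit projections $\pi_{i}$.
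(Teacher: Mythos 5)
Your proof is correct, and it is in substance exactly the argument that the paper delegates entirely to its reference: the paper's ``proof'' of Proposition \ref{creates} consists of the single citation \cite[Exercise IV.2.2 (p. 138)]{MacLane}, so you have filled in precisely what that exercise asks for. Your two-step structure --- first reduce along the comparison functor $K\colon\mathcal{A}\to\mathcal{B}^{T}$ to the Eilenberg--Moore forgetful functor $U^{T}$, then construct the algebra structure $a\colon TL\to L$ on the limit vertex by the universal property and verify the unit, associativity, cone, and factorization conditions --- is the standard proof, and all of your computations (naturality of $\eta$ and $\mu$ pushed through the defining relation $\pi_{i}\circ a=a_{i}\circ T\pi_{i}$, uniqueness of the lifted structure, faithfulness of $U^{T}$ for uniqueness of the factoring algebra map) check out.

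One caveat deserves to be made explicit, because it interacts with how the paper later \emph{uses} this proposition. Your reduction step asserts that ``an equivalence of categories creates limits'' and that creation is ``stable under natural isomorphism.'' This is true for the isomorphism-tolerant notion of creation (lift exists up to isomorphism and is reflected as a limit) but \emph{false} for the strict notion in Mac Lane's sense (a unique cone lying strictly over the given one): an equivalence that is not surjective on objects strictly creates nothing, and strict creation is not invariant under natural isomorphism of functors. So under the strict reading, your argument proves strict creation only for $U^{T}$ itself (or for strictly monadic $U$, i.e.\ $K$ an isomorphism), while a merely monadic $U$ creates limits only up to isomorphism. This is not a pedantic point here: it is exactly the gap the paper itself has to patch in the proof of Theorem \ref{only-theorem}, where the limit computed in $R$-$\operatorname{Mod}$ need not lie strictly in the image of the Grothendieck category, and the repletion maneuver of Proposition \ref{equivalent} is invoked to repair this. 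You should therefore state which definition of ``creates'' you are using; with the weaker definition your proof is complete as written, and with the strict one you should either assume strict monadicity or note that the conclusion holds only up to the isomorphism that the paper's repletion argument is designed to absorb.
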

\begin{proof} \cite[Exercise IV.2.2 (p. 138)]{MacLane}.
\end{proof}

We need  another definition and Proposition.

\begin{definition}
    Let $\mathcal{B}$ be a subcategory of a category $\mathcal{C}$. Then it is called replete if for every $x \in \operatorname{Obj} \, \mathcal{B}, y \in \operatorname{Obj} \, \mathcal{C}$, if $f \in \operatorname{Hom}_\mathcal{C}(x,y)$ is an isomorphism, then $y$ is in $\mathcal{B}$ and $f \in \operatorname{Hom}_\mathcal{B}(x,y)$.
\end{definition}

Since being replete is a closure condition, by the repletion $\operatorname{repl}(\mathcal{B})$ of a subcategory $\mathcal{B}$ of a category $\mathcal{C}$ we mean the smallest replete subcategory of $\mathcal{C}$ containing it.

\begin{proposition}\label{equivalent}
If $\mathcal{B}$ is a full subcategory of $\mathcal{C}$, then $\operatorname{repl}(\mathcal{B})$ and $\mathcal{B}$ are equivalent.
\end{proposition}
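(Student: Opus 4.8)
The plan is to first make the repletion completely explicit, and then read off the equivalence. Concretely, I claim that $\operatorname{repl}(\mathcal{B})$ is nothing but the full subcategory $\mathcal{R}$ of $\mathcal{C}$ whose objects are exactly those $y \in \operatorname{Obj}\mathcal{C}$ admitting an isomorphism $y \cong b$ in $\mathcal{C}$ with $b \in \operatorname{Obj}\mathcal{B}$. Once this identification is in hand, the proposition is immediate: the inclusion functor $\iota \colon \mathcal{B} \hookrightarrow \mathcal{R}$ is fully faithful, because $\mathcal{B}$ and $\mathcal{R}$ are both full in $\mathcal{C}$ and hence $\operatorname{Hom}_\mathcal{B}(x,y) = \operatorname{Hom}_\mathcal{C}(x,y) = \operatorname{Hom}_\mathcal{R}(x,y)$ for $x,y \in \operatorname{Obj}\mathcal{B}$; and it is essentially surjective by the very definition of the objects of $\mathcal{R}$. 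A fully faithful, essentially surjective functor is an equivalence, which gives the claim.

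So the work is entirely in justifying $\operatorname{repl}(\mathcal{B}) = \mathcal{R}$. First I would check that $\mathcal{R}$ is a replete subcategory containing $\mathcal{B}$. It is a genuine subcategory, since identities and composites of $\mathcal{C}$-morphisms between objects of $\mathcal{R}$ stay inside (as $\mathcal{R}$ is full on its object class); it contains $\mathcal{B}$ because every object of $\mathcal{B}$ is isomorphic to itself and because fullness forces the relevant hom-sets to agree; and it is replete because if $x \in \operatorname{Obj}\mathcal{R}$ and $f \colon x \to y$ is an isomorphism in $\mathcal{C}$, then $y \cong x \cong b$ for some $b \in \operatorname{Obj}\mathcal{B}$, so $y \in \operatorname{Obj}\mathcal{R}$, and $f$ lies in $\mathcal{R}$ by fullness.

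The heart of the argument --- and the step I expect to be the main obstacle --- is minimality: every replete subcategory $\mathcal{S}$ with $\mathcal{B} \subseteq \mathcal{S} \subseteq \mathcal{C}$ must contain all of $\mathcal{R}$, \emph{including its morphisms}. For objects this is direct: if $y \in \operatorname{Obj}\mathcal{R}$, pick an isomorphism $f \colon b \to y$ with $b \in \operatorname{Obj}\mathcal{B} \subseteq \operatorname{Obj}\mathcal{S}$; repleteness of $\mathcal{S}$ then places $y$ (and $f$, together with its inverse $f^{-1}$) into $\mathcal{S}$. For morphisms, the subtlety is that $\mathcal{S}$ need not be full, so I cannot simply invoke fullness of $\mathcal{S}$; instead I must use fullness of $\mathcal{B}$. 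Given a morphism $g \colon y \to y'$ of $\mathcal{R}$, choose isomorphisms $f \colon b \to y$ and $f' \colon b' \to y'$ with $b,b' \in \operatorname{Obj}\mathcal{B}$. Then $h := (f')^{-1} \circ g \circ f \colon b \to b'$ is a $\mathcal{C}$-morphism between two objects of $\mathcal{B}$, so by fullness of $\mathcal{B}$ it already lies in $\mathcal{B} \subseteq \mathcal{S}$. Since $f, f', f^{-1}, (f')^{-1}$ all lie in $\mathcal{S}$ by the object argument and $\mathcal{S}$ is closed under composition, the identity $g = f' \circ h \circ f^{-1}$ exhibits $g$ as a morphism of $\mathcal{S}$. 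Hence $\mathcal{R} \subseteq \mathcal{S}$, proving $\mathcal{R} = \operatorname{repl}(\mathcal{B})$.

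A final remark on foundations, in keeping with the paper's theme: the passage from ``fully faithful and essentially surjective'' to ``equivalence'' requires choosing, for each object of $\mathcal{R}$, a witnessing object of $\mathcal{B}$ together with an isomorphism, so a quasi-inverse $\mathcal{R} \to \mathcal{B}$ is produced by an appeal to choice. If one wishes to avoid it here, the conclusion should be read as the (choice-free) assertion that the inclusion $\iota$ is fully faithful and essentially surjective, which is the content one actually uses downstream.
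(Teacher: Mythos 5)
Your proof is correct and takes essentially the same route as the paper's: both rest on identifying $\operatorname{repl}(\mathcal{B})$ with the full subcategory of $\mathcal{C}$ on objects isomorphic to objects of $\mathcal{B}$, so that every morphism factors as $g \circ f \circ h^{-1}$ with $f$ in $\mathcal{B}$ and $g,h$ isomorphisms, and both conclude by observing that the inclusion $\mathcal{B} \hookrightarrow \operatorname{repl}(\mathcal{B})$ is fully faithful and essentially surjective. The only difference is one of care: the paper asserts this factorization outright, whereas you actually establish it (repleteness of your $\mathcal{R}$ plus the minimality argument using fullness of $\mathcal{B}$), and your closing observation that producing an actual quasi-inverse requires choice is apt given the paper's stated aim of avoiding it.
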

\begin{proof}
Every morphism $\mathfrak{f}$ in $\operatorname{repl}(\mathcal{B})$ factors as  $g \circ f \circ h^{-1}$, where $f$ is a morphism of $\mathcal{B}$ and $h,g$ isomorphisms in $\mathcal{C}$. If, say, the domain and codomain of $\mathfrak{f}$ are in $\mathcal{B}$, then $g$ and $h$, and hence also $\mathfrak{f}$ must actually belong to $\mathcal{B}$. Hence the inclusion $\mathcal{B} \into \operatorname{repl}(\mathcal{B})$ is fully faithfull and essentially surjective, and we are done.
\end{proof}

With this preparation, finally we can show

\begin{theorem}\label{only-theorem}
    Limits in a Grothendieck category can be constructed exactly like in \textbf{SET}.
\end{theorem}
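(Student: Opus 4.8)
The plan is to transport the explicit \textbf{SET}-level construction of limits recalled above through the Gabriel--Popescu embedding $G \colon \mathfrak{Gr} \into R-\operatorname{Mod}$. First I would record that $R-\operatorname{Mod}$ is complete and that its limits are already ``computed as in \textbf{SET}'': the forgetful functor $R-\operatorname{Mod} \to \textbf{SET}$ is itself monadic and hence creates limits, so the limit of any diagram $\mathcal{D}\colon \mathcal{I} \to R-\operatorname{Mod}$ is the submodule of compatible families $\{\langle x_i \rangle_{i \in \mathcal{I}} : \mathcal{D}(\phi)(x_i) = x_{i'}\}$ of the product $\prod_{i} \mathcal{D}(i)$, endowed with the componentwise $R$-action. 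Thus for $R-\operatorname{Mod}$ the assertion of the theorem holds by inspection, and the task reduces to pushing this description across $G$.

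Next I would invoke monadicity. By the Gabriel--Popescu Theorem $G$ is fully faithful and admits an exact left adjoint $F$, so $G$ is monadic; by Proposition \ref{creates} it therefore creates all limits that exist in its codomain $R-\operatorname{Mod}$. Since $R-\operatorname{Mod}$ is complete, this simultaneously shows that $\mathfrak{Gr}$ is complete and that, for every diagram $\mathcal{D}\colon \mathcal{I} \to \mathfrak{Gr}$, the limiting cone over $\mathcal{D}$ in $\mathfrak{Gr}$ is characterized as the cone whose image under $G$ is the limiting cone over $G \circ \mathcal{D}$ in $R-\operatorname{Mod}$. Combining this with the first paragraph, $G$ carries the limit of $\mathcal{D}$ in $\mathfrak{Gr}$ precisely to the module of compatible families of the underlying diagram $G \circ \mathcal{D}$, which is the desired \textbf{SET}-style recipe.

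The remaining point is to make this identification genuinely \emph{definite} rather than merely up to isomorphism, and this is where repleteness enters. The strict image of $G$ is a full subcategory $\mathcal{B}$ of $R-\operatorname{Mod}$, and its repletion $\operatorname{repl}(\mathcal{B})$ is exactly the essential image of $G$; by Proposition \ref{equivalent} the inclusion $\mathcal{B} \into \operatorname{repl}(\mathcal{B})$ is an equivalence, so $\operatorname{repl}(\mathcal{B}) \simeq \mathfrak{Gr}$. Because $\operatorname{repl}(\mathcal{B})$ is closed under isomorphism, the specific module of compatible families produced by the \textbf{SET}-construction is recognized as an object of $\operatorname{repl}(\mathcal{B})$, and under the equivalence with $\mathfrak{Gr}$ it is the limit we are after. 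In this way the limit in $\mathfrak{Gr}$ is pinned down by a single uniform formula evaluated in \textbf{SET}, as claimed.

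I expect the main obstacle to be the interplay in the second and third steps: monadic creation supplies uniqueness and hence definiteness of the lift, but $\mathfrak{Gr}$ is only \emph{equivalent} to the category of algebras (equivalently, to $\operatorname{repl}(\mathcal{B})$), not isomorphic to it, so one must check that the explicit compatible-families module actually lands in the essential image and corresponds to the genuine limit under this equivalence. A fully faithful right adjoint would only give preservation and reflection of limits; the extra strength needed to realize the limit as a concrete, uniquely determined object is exactly what monadicity contributes through Proposition \ref{creates}, and verifying that the repletion step of Proposition \ref{equivalent} does not disturb the matching with the \textbf{SET}-formula is the delicate bookkeeping that keeps the whole construction inside \textbf{ZFC}.
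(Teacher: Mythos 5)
Your proposal is correct and follows essentially the same route as the paper: reduce to $R\text{-}\operatorname{Mod}$ via the Gabriel--Popescu embedding, use monadic creation of limits (Proposition~\ref{creates}) to lift the explicit compatible-families construction, and invoke repletion with Proposition~\ref{equivalent} to guarantee the concrete \textbf{SET}-style limit object actually lies in (a category equivalent to) $\mathfrak{Gr}$. The only cosmetic difference is in the base case: you justify that limits in $R\text{-}\operatorname{Mod}$ are literally the \textbf{SET} construction via monadicity of the forgetful functor, whereas the paper appeals to its amnesticity, and your closing paragraph makes explicit the equivalence-versus-isomorphism subtlety that the paper passes over more quickly.
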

\begin{proof}
    First we remark that if our category is $R$-$\operatorname{Mod}$ for an unital ring, this is clear. The technical reason is that we have that the forgetful functor from this category to \textbf{SET} is amnestic (cf. \cite[Chap I, Definition 3.27]{Joy}). If $\mathfrak{Gr}$ is a Grothendieck category and $F:\mathfrak{Gr} \rightarrow R-\operatorname{Mod}$ is the monadic functor from Gabriel-Popescu Theorem, then by Proposition \ref{creates} everything would be finished if we had a guarantee that the limit of a diagram in $\mathfrak{Gr}$, constructed as usual in $R-\operatorname{Mod}$, \emph{still} lies in $\mathfrak{Gr}$. However, by the previous Proposition, we can replace $\mathfrak{Gr}$ by its repletion. In this case the limit has the desired property.
\end{proof}

We now can define the left derived functors $L_n(F)$ of a right exact functor $F: \mathcal{C} \rightarrow \mathfrak{Gr}$ from an abelian category to a Grothendieck category.

The following is the sole new definition of this note\footnote{And it is here that the Axiom of Regularity is crucial.}.

\begin{definition}
    Let $M$ be an object in a concrete locally small abelian category $\mathcal{C}$, with a functor $\iota: \mathcal{C} \rightarrow \mathsf{SET}$ which is a fully faithful embedding, and $(\mathsf{P}, \epsilon)$ a projective resolution of it. We write $\operatorname{rk}(\mathsf{P})$ for the the ordinal $\operatorname{sup}\{ \beta \in \mathsf{ORD}| (\exists n \in \mathbb{N}) \text{ such that} \operatorname{rk}(\iota(P_n)=\beta \}$.
\end{definition}

By Scott's trick (cf. \cite[p. 65]{Jech}), using only the axiom schema of replacement (the axiom of choice is \emph{not} needed in this step), we have a proper class function $\bar{F}: \mathcal{C} \rightarrow \mathsf{V}$ that sends each object $M$ to the \emph{set} $\bar{F}(M)$ of projective resolutions of minimal rank.

Now, given $\mathcal{I}=\bar{F}(M)$, we have that if $(\mathsf{P}^i, \epsilon_i), i=1,2,3$ belongs to $\mathcal{I}$, by \cite[Proposition 5.1]{HS} there are \emph{canonical} isomorphism $\phi_{ij}:H^n(F(\mathsf{P}^i)) \rightarrow  H^n(F(\mathsf{P}^j))$. We have that $\phi_{ii}=\operatorname{id}$, $\phi_{jk}\phi_{ij}=\phi_{ik}$, and $\phi_{ij}=\phi_{ji}^{-1}$.


Our desired object is now simple to describe: we can turn $\mathcal{I}$ into a small category such that, given two objects $i,j \in \mathcal{I}$, there is only one isomorphism between them, and we want to define $L_n(F)(M)=\operatorname{Lim}_\mathcal{I}H^n(F(\mathsf{P}_i))$ as the limit indexed by $\mathcal{I}$. There is, however, a small techinical detail: usually, limits are not unique - they are only unique up to isomophism. So, à priori, we would need the axiom of global choice to choose $L_n(F)$ uniquely once and for all. However, in our case, thanks to Theorem \ref{only-theorem}, we can deal with limits uniformly: they have the same construction as in \textbf{SET}.




Clearly, given any resolution $(\mathsf{Q}, \epsilon)$ of $M$, $H^n(F(\mathsf{Q}))$ is isomorphic to $L_n(F)(M)$. 

Having defined $L_n(F)$ for objects, it is simple to define it for morphisms. If $f:M \rightarrow N$ is a morphism between two objects in $\mathcal{C}$, and $(\mathsf{P}^1, \epsilon_1)$ and $(\mathsf{P}^1, \epsilon_2)$ are projective resolutions of $M$ and $N$ respectively, by \cite[Theorem 4.1]{HS} the map $f$ can be extended uniquely up to homotopical equivalence to a chain map $\bar{f}$ between $\mathsf{P}^1$ and $\mathsf{P}^2$. Let $\mathbb{F}$ be the collection of all chain maps $\bar{f}^*$ that extends $f$. Since we have a countable number of objects in a chain complex, and our category is locally small, $\mathbb{F}$ is a honest set, not a proper class. We define on $\mathbb{F}$ an equivalence relation $\mathsf{f} \sim \mathsf{g}$ if and only if they induce the same morphism  between $H^n(F(\mathsf{P}^1))$ and $H^n(F(\mathsf{P}^2))$. By \cite[Corollary 3.5]{HS}, $\sim$ has only one equivalence class, that is, the whole $\mathbb{F}$. We define this unique morphism between $H^n(F(\mathsf{P}^1))$ and $H^n(F(\mathsf{P}^2))$ as the \emph{same} map on cohomology induced by \emph{any} element of $\mathbb{F}$\footnote{the introduction of $\mathbb{F}$ and the equivalence relation $\sim$ was done to avoid the use of any form of the axiom of choice}, which we will denote by $\theta_n(M,N, \mathsf{P}^1, \mathsf{P}^2)$. If $(\mathsf{Q}^1, \epsilon_1')$ and $(\mathsf{Q}^2, \epsilon_2')$ are two other projective resolutions of $M$ and $N$ respectively, denoting the canonical isomorphism between $H^n(F(\mathsf{P}^1))$ and $H^n(F(\mathsf{Q}^1))$ by $\phi_n(\mathsf{P}^1, \mathsf{Q}^1)$ and the one between $H^n(F(\mathsf{P}^2))$ and $H^n(F(\mathsf{Q}^2))$ by $\phi_n(\mathsf{P}^2, \mathsf{Q}^2)$, we have (\cite[p. 132]{HS}) that

\[(1) \, \,  \theta_n(M,N,\mathsf{P}^1,\mathsf{P}^1)=\phi_n(\mathsf{P}^1,\mathsf{Q}^1) \theta_n(M,N,\mathsf{Q}^1,\mathsf{Q}^1) \phi_n(\mathsf{P}^2,\mathsf{Q}^2)^{-1} \]

\medskip

Let's write $\mathcal{I}=\bar{F}(M)$ and $\mathcal{I}^*=\bar{F}(N)$ for the small categories used in the construction of the derived functors on the objects. Let $i \in \mathcal{I}$ and consider $H^n(F(\mathsf{P}^i))$. For each $i^* \in \mathcal{I}^*$ consider the corresponding cohomology group $H^n(F(\mathsf{Q}^{i^*}))$. Consider the maps $\rho_{i}:H^n(F(\mathsf{P}^i)) \rightarrow \prod_{i^* \in I^*}H^n(F(\mathsf{Q}^{i^*}))$ given by $\langle \theta_n(M,N,\mathsf{P}^i,\mathsf{Q}^{i^*}) \rangle_{i^* \in I^*}$. Because of equation (1), this map induces a map $\bar{\rho_i}:H^n(F(\mathsf{P}^i)) \rightarrow \operatorname{Lim}_{\mathcal{I}^*} H^n(F(\mathsf{Q}^{i^*}))$. Since, again by equation (1), $\bar{\rho_{i}} \phi_{ij}=\bar{\rho_{j}}$, we have finally a map $L_n(F)(f):L_n(F)(M) \rightarrow L_n(F)(N)$.

It now remains to show that if we have $M \xrightarrow{f} N \xrightarrow{g} O$, $L_n(F)(g \circ f)=L_n(F)(g) \circ L_n(F)(f)$. If $(\mathsf{P}^1,\epsilon_1), (\mathsf{P}^2, \epsilon_2), (\mathsf{P}^3,\epsilon_3)$ are projective resolutions of $M, N, O$ respectively, if $\bar{f}$ is chain map extending $f$, $\bar{g}$ is a chain map extending $g$, it is clear that $\bar{g} \circ \bar{f}$ is a chain map between $(\mathsf{P}^1,\epsilon_1)$ and $(\mathsf{P}^3,\epsilon_3)$ extending $g \circ f$. Again, by \cite[Corollary 3.5]{HS}, we have that

\[ \theta_n(N, O, \mathsf{P}^2, \mathsf{P}^3) \circ \theta_n(M,N, \mathsf{P}^1, \mathsf{P}^2) = \theta_n(M, O, \mathsf{P}^1, \mathsf{P}^3), \]

and same as before, using the compatibility relations of equation (1), we get $L_n(F)(g \circ f)=L_n(F)(g) \circ L_n(f)$.

Also, it is clear by the same \cite[Corollary 3.5]{HS} that $L_n(F)(\operatorname{Id}_M)=\operatorname{Id}$. So we finally can say that we have a functor.

That they are $\delta$-functors in the sense of Grothendieck \cite{Grothendieck} can be carried out with by now clear modifications from the expositions in standard textbooks on homological algebra. And, as it is clear, every theorem of this theory can be carried out in \textbf{ZFC}.

\section{Metamathematical consideradions}

Finally, some metamathematical remarks will be made now about the choice of our framework. They are aimed specially at the general mathematician, as they are well known in the logic community. A naive alternative to what we did would be: work in a set theory that has proper classes like \textbf{NBG} with the axiom of global choice included, and among every proper class of isomorphisms classes of cohomology groups, select a representative. \textbf{NBG} plus the axiom of global choice is conservative over \textbf{ZFC} (\cite[p. 134]{Foundations}), but as we mentioned before, localizing a category can ``explode" to the next Universe, and it would be very cumbersome to deal with this fact with set theory with proper classes, even if a stronger one, like \textbf{MK}, was considered (cf. \cite{Shulman}). Also, our approach naturally identifies each cohomology group in a very natural way, which avoids such arbitrary choices. The other usual alternative is to assume the existence of enough Grothendieck universes. However, this is a somewhat strong metamathematical commitment, for even the theory \textbf{ZFC+Cons(ZFC+U)}, where \textbf{Cons(ZFC+U)} is the sentence\footnote{In \textbf{ZFC}.} that says 'the theory \textbf{ZFC} adjoined with an axiom postulating the existence of at least one Grothendieck universe is consistent' demands a stronger consistency assumption than just \textbf{ZFC} alone (cf. \cite[p. 145]{Kunen}).

Of course there are other approaches to the set-theoretical foundations of category theory (see, e.g., \cite{Shulman}); we mention, in particular, reflection principles (cf. \cite{Kunen} \cite{Jech}). But we think that our approach has the merit of being as simple as possible and remaining in plain \textbf{ZFC}.


The author would like to acknowledge the important discussions with H. L. Mariano and R. Freire, who also read preliminary versions of this note, the the kind and useful feedback from D. Roberts, who read the first arXiv version of this preprint and A. L. Tenório, which whom the author discussed the final form of this manuscript.


\end{document}